\newtheorem{thm}{Theorem}[section]
\newtheorem{prop}[thm]{Proposition}
\newtheorem{lem}[thm]{Lemma}
\newtheorem{cor}[thm]{Corollary}
\theoremstyle{definition}
\newtheorem{definition}[thm]{Definition}
\theoremstyle{remark}
\numberwithin{equation}{section}
\newcommand{\ip}[2]{\left\langle {#1} , {#2} \right\rangle} 
\newcommand{\norm}[1]{\left\lVert #1 \right\rVert} 
\newcommand{\R}{\mathbb{R}} 
\newcommand{\N}{\mathbb{N}} 
\newcommand{\C}{\mathbb{C}} 
\begin{document}


\title{A Szeg\H{o} Limit Theorem for Radially-Compressed Toeplitz Operators}


\author{Trevor Camper}
\address{School of Mathematical and Statistical Sciences, Clemson University,
Clemson, SC 29630}
\email{trcampe@g.clemson.edu}





\begin{abstract}
    We obtain Szeg\H{o}-type Limit Theorems in the setting of Reproducing Kernel Hilbert Spaces on discs in $\C$. From this, we derive a formula for the density of the eigenvalues of compressions of Toeplitz operators. Examples for the Bergman and Segal-Bargmann-Fock space are also presented.   
\end{abstract}


\maketitle






\section{Introduction}
Let $c:=\{c_{n}\}_{n\geq 0}$ be a strictly positive sequence, i.e. $c_{n}>0$ for each $n\in\N\cup\{0\}$, such that the sequence $\{c_{n+1}/c_{n}\}$ is monotonic. Let 
\begin{align*}
    R=\lim_{n\to\infty}\frac{c_{n+1}}{c_{n}},
\end{align*}
where $R=\infty$ if the corresponding limit diverges. We assume that  $c:=\{c_{n}\}_{n\geq 0}$ is the moment sequence of an absolutely continuous, finite measure $d\mu(r)=\mu(r)dr$ with $\text{supp}(\mu)=[0,R]$, i.e.
\begin{align*}
    c_{n}=\int_{0}^{R}r^{n}\mu(r)dr.
\end{align*}

Denote by $\mathcal{H}_{c}$ the space of analytic functions with expansion 
\begin{align*}
    f(z)=\sum_{n=0}^{\infty}a_{n}z^{n}
\end{align*}
for $|z|<R$ such that 
\begin{align*}
    \norm{f}^{2}_{c}:=\sum_{n=0}^{\infty}|a_{n}|^{2}c_{2n+1}<\infty.
\end{align*}
$\mathcal{H}_{c}$ is a Reproducing Kernel Hilbert space (RKHS) with inner product inherited from $\norm{\cdot}_{c}$. Note that for the measure $d\nu(z)=\mu(|z|)\frac{dz}{2\pi}$ we see 
\begin{align*}
    \ip{\frac{z^{n}}{\sqrt{c_{2n+1}}}}{\frac{z^{\ell}}{\sqrt{c_{2\ell+1}}}}_{L^{2}(\nu)}=\frac{1}{\sqrt{c_{2k+1}c_{2\ell+1}}}\int_{0}^{R}r^{k+\ell+1}d\mu(r)\int_{0}^{2\pi}e^{i(k-\ell)\theta}\frac{d\theta}{2\pi}=\delta_{\ell,k}.
\end{align*}
Thus for $f\in\mathcal{H}_{c}$ we see 
\begin{align*}
    \ip{f}{\frac{z^{k}}{\sqrt{c_{2k+1}}}}_{L^{2}(\nu)}=a_{k}\frac{1}{\sqrt{c_{2k+1}}}\int_{0}^{R}r^{2k+1}d\mu(r)=a_{k}\sqrt{c_{2k+1}}.
\end{align*}
Hence, 
\begin{align*}
    \norm{f}^{2}_{c}=\sum_{n=0}^{\infty}|a_{k}|^{2}c_{2k+1}=\sum_{n=0}^{\infty}\left|\ip{f}{\frac{z^{n}}{\sqrt{c_{2n+1}}}}_{L^{2}(\nu)}\right|^{2}=\norm{f}^{2}_{L^{2}(\mathbb{D}(0,R),\nu)}.
\end{align*}
From this, we can see that $\mathcal{H}_{c}$ can be viewed as a closed subspace of $L^{2}(\mathbb{D}(0,R),\nu)$. The most interesting such spaces are when $\mu(r)=2$ (i.e., $c_{2n+1}=(n+1)^{-1}$ and $\mathcal{H}_{c}$ is the Bergman space $A^{2}(\mathbb{D})$), and when $\mu(r)=2e^{-r^{2}}$ (i.e., $c_{2n+1}=n!$ and $\mathcal{H}_{c}$ is the Segal-Bargmann-Fock Space $\mathcal{F}^{2}(\C)$). Our main object of study will be Toeplitz operators $T_{\sigma}$ with $\sigma\in L^{\infty}(\mathbb{D}(0,R))$ acting on $\mathcal{H}_{c}$. 
\begin{definition}
    Let $\sigma\in L^{\infty}(\mathbb{D}(0,R),\nu)$, and let $P:L^{2}(\mathbb{D}(0,R),\nu)\to\mathcal{H}_{c}$ be the projection onto $\mathcal{H}_{c}$. A Toeplitz operator $T_{\sigma}:\mathcal{H}_{c}\to\mathcal{H}_{c}$ is the operator $PM_{\sigma}$, where $M_{\sigma}$ is the multiplication operator on $L^{2}(\mathbb{D}(0,R),\nu)$ associated to $\sigma$.  
\end{definition}
The spectral asymptotics of operators is a well studied field, see \cite{guille79} for a history of the subject and \cite{DMFN02,FN01} for the case of Toeplitz operators acting on spaces similar to $\mathcal{H}_{c}$. In the case of the Hardy space $\mathcal{H}^{2}(\mathbb{D})$, one of the most classical result concerning the spectral asymptotics of Toeplitz operators is the Szeg\H{o} Limit Theorem \cite{SZ15}: if $\psi:\mathbb{R}\to\mathbb{R}$ is continuous, and $\sigma:\mathbb{T}\to\R$ is continuous, then 
\begin{align*}
    \lim_{N\to\infty}\frac{1}{N}\text{tr}\left(\psi\left(P_{N}T_{\sigma}P_{N}\right)\right)=\frac{1}{2\pi}\int_{0}^{2\pi}\psi\left(\sigma(x)\right)dx, \tag{1}
\end{align*}
where $P_{N}$ denotes the projection onto $\text{span}\{e^{ik\theta}:k=0,\pm 1,\cdots,\pm N\}$. In this note, we will be interested in limits of a similar form. In the context of the Bergman and Segal-Bargmann-Fock spaces, the radial Toeplitz operators are those Toeplitz operators which are diagonalized by the monomials $z^{n}$. In keeping with the classical case, we will be concerned with compressions onto these eigenfunctions which motivates the following definition. 
\begin{definition}
    Let $\mathcal{K}_{N}=\text{span}\{\frac{z^{n}}{\sqrt{c_{2n+1}}}:n=0,1,\cdots,N\}$, let $P_{N}$ denote the orthogonal projection onto $\mathcal{K}_{N}$, and let $T_{\sigma}$ denote the Toeplitz operator with symbol $\sigma\in L^{\infty}(\mathbb{D})$ acting on $\mathcal{H}_{c}$. We call the operator $P_{N}T_{\sigma}P_{N}$ a radially-compressed Toeplitz operator.  
\end{definition}
As compared to the classical Szeg\H{o} Limit Theorem, we will obtain results of the form: for $\psi:\mathbb{R}\to\C$ and $\sigma:\mathbb{D}(0,R)\to\R$ continuous,  
\begin{align*}
    \lim_{N\to\infty}\frac{1}{N}\text{tr}\left(\psi\left(P_{N}T_{\sigma}P_{N}\right)\right)=\lim_{r\to R}\frac{1}{2\pi}\int_{0}^{2\pi}\psi\left(\sigma(re^{i\theta})\right)d\theta.
\end{align*}
 Essentially, this statement says that the asymptotic spectral averages of radially-compressed Toeplitz operators is determined by the boundary behavior of the symbol. From this result, as is standard in the literature, we obtain the following Weyl law: for $\lambda>0$, 
\begin{align*}
    \lim_{N\to\infty}\frac{\#\{0\leq j\leq N:\lambda_{j}^{N}(\sigma)>\lambda\}}{N+1}=\frac{|\{\Tilde{\sigma}>\lambda\}|}{2\pi},
\end{align*}
where $\{\lambda_{j}^{N}(\sigma)\}_{j}$ are the eigenvalues of the compression $P_{N}T_{\sigma}P_{N}$.

The structure of this note is as follows. In Section 2, we will prove some preliminary facts that will be used throughout. In Section 3, we will present the proof strategy for our Szeg\H{o} limit and obtain the associated spectral density for $\mathcal{H}_{c}$. In Section 4, we present the corresponding results for the Bergman and Segal-Bargmann-Fock spaces. 
\section{Preliminaries}
The strategy for proving our results relies on the following proposition proved below. Essentially, the vectors $z^{n}/\sqrt{c_{2n+1}}$ give rise to probability measures $\mu_{n}$ which experience mass escaping at the boundary of $\mathbb{D}(0,R)$ and hence sample $\sigma$ along this boundary. This motivates the following definition.
\begin{definition}
    Let $\mu_{n}$ be the family of probability measures on $[0,R)$ satisfying $d\mu_{n}(r)=\frac{1}{c_{2n+1}}r^{2n+1}d\mu(r)$. 
\end{definition}
Following this, we prove an approximation result which allows us to compare the trace of our radially-compressed Toeplitz operator with the trace of a compressed Toeplitz operator whose symbol is independent of the magnitude of the argument. Finally, identifying this Toeplitz operator with a matrix and invoking a general Szeg\H{o} limit result of \cite{bourget2017first} gives the result. First, we make rigorous this notion of mass escaping at the boundary with the following lemma.
\begin{prop}
    Let $\mu_{n}$ be defined as above. Then, 
    \begin{itemize}
        \item[1.] For any $\Tilde{R}<R$, we have $\mu_{n}\left([0,\Tilde{R})\right)\to 0$ as $n\to\infty$. 
        \item[2.] If $R<\infty$, then for any $m\in\N$ 
        \begin{align*}
            \frac{c_{2l+m+1}}{\sqrt{c_{2l+2m+1}}\sqrt{c_{2l+1}}}\to 1,
        \end{align*}
        as $l\to\infty$. 
    \end{itemize}
\end{prop}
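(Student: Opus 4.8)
For part 1, the plan is to exploit the monotonicity of $\{c_{n+1}/c_n\}$ together with the explicit form of $\mu_n$. First I would observe that
\begin{align*}
    \mu_n\left([0,\tilde R)\right) = \frac{1}{c_{2n+1}}\int_0^{\tilde R} r^{2n+1}\,d\mu(r),
\end{align*}
and I would like to compare this against $c_{2n+1} = \int_0^R r^{2n+1}\,d\mu(r)$. The key point is that $r^{2n+1}$ on $[0,\tilde R)$ is exponentially smaller (in $n$) than the typical mass of $r^{2n+1}$ near $R$. To make this precise when $R<\infty$: pick $\tilde R < R' < R$; then $\int_0^{\tilde R} r^{2n+1}\,d\mu(r) \le \tilde R^{2n+1}\mu([0,R])$, while $c_{2n+1} \ge \int_{R'}^{R} r^{2n+1}\,d\mu(r) \ge (R')^{2n+1}\mu([R',R])$, and since $\mu$ has support all of $[0,R]$ the factor $\mu([R',R])$ is strictly positive. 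Hence the ratio is $\le C(\tilde R/R')^{2n+1}\to 0$. When $R=\infty$ a similar argument works after normalizing: fix $R'>\tilde R$ and bound $c_{2n+1}\ge (R')^{2n+1}\mu([R',R'+1])$ (positive by the support hypothesis) against $\tilde R^{2n+1}\mu([0,R])$ in the numerator; the ratio is again geometric. So part 1 is essentially a Laplace-type localization and should be routine.

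For part 2 (with $R<\infty$), the strategy is to recognize that the claim is equivalent to saying $c_{k+1}/c_k \to R$, i.e. the given limit $R = \lim c_{n+1}/c_n$ pins down the asymptotic growth rate, and then that ratios of the form $c_{2l+m+1}/\sqrt{c_{2l+2m+1}c_{2l+1}}$ telescope into products of consecutive-ratio quotients that each tend to $1$. Concretely, write the expression as a product of terms $\frac{c_{j+1}/c_j}{c_{j'+1}/c_{j'}}$ over appropriate ranges of indices all marching off to infinity with $l$; since each ratio $c_{n+1}/c_n\to R\in(0,\infty)$, each such quotient tends to $1$, and there are only finitely many factors (the count depends on $m$ but not on $l$), so the product tends to $1$. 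I would first verify the bookkeeping: $\log\big(c_{2l+m+1}^2/(c_{2l+2m+1}c_{2l+1})\big) = 2\log c_{2l+m+1} - \log c_{2l+2m+1} - \log c_{2l+1}$, and using $\log c_{n+1}-\log c_n \to \log R$ together with Cesàro/summation-by-parts this difference of sums over blocks of equal length $m$ collapses to something $o(1)$.

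The main obstacle is part 2 in the regime where $R$ could a priori be such that the individual ratios converge but the telescoped product has a subtle cancellation issue — but in fact the cleanest route avoids this: since $R<\infty$ and $\{c_{n+1}/c_n\}$ is monotonic with finite limit $R$, it is a convergent (hence bounded, and eventually Cauchy) sequence, so for any fixed block length $m$ the difference $\log c_{2l+m+1} - \log c_{2l+1} = \sum_{j=2l+1}^{2l+m} \log(c_{j+1}/c_j) = m\log R + o(1)$ and likewise $\log c_{2l+2m+1} - \log c_{2l+m+1} = m\log R + o(1)$, so the two contributions cancel to $o(1)$ and exponentiating gives the limit $1$. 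The only real care needed is ensuring $R>0$ so that $\log R$ is finite — but $R>0$ is immediate since $c_{n+1}/c_n = \int r^{n+1}d\mu / \int r^n d\mu$ stays bounded below by any $R'<R$ for large $n$ using the support condition exactly as in part 1. Thus I expect part 1's localization estimate to be the genuinely substantive computation and part 2 to follow formally once the growth rate $R$ is known to be a genuine positive limit of consecutive ratios.
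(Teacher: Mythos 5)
Your proposal is correct and follows essentially the same route as the paper: part 1 is the identical localization estimate, bounding the numerator by $\tilde R^{2n+1}\mu([0,R])$ and the denominator below by $(R')^{2n+1}\mu([R',R])$ to get a geometric ratio, and part 2 is the same telescoping of the expression into finitely many consecutive-ratio factors $c_{j+1}/(c_jR)\to 1$ (the paper does this multiplicatively rather than via logarithms, a purely cosmetic difference). Your extra remark that $R>0$ must be checked is a fair point the paper leaves implicit, but it changes nothing substantive.
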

\begin{proof}
    For the first point, choose $\Tilde{R}<R_{1}<R$. Then, 
    \begin{align*}
        0\leq \mu_{n}\left([0,\Tilde{R})\right)=\frac{\int_{0}^{\Tilde{R}}r^{2n+1}d\mu(r)}{\int_{0}^{R}r^{2n+1}d\mu(r)}\leq\frac{\mu([0,\Tilde{R}))\left(\Tilde{R}\right)^{2n+1}}{\int_{R_{1}}^{R}r^{2n+1}d\mu(r)}\lesssim\left(\frac{\Tilde{R}}{R_{1}}\right)^{2n+1}\to0
    \end{align*}
    as $n\to\infty$. For the second point, note that 
    \begin{align*}
        \frac{c_{2l+m+1}}{\sqrt{c_{2l+2m+1}}\sqrt{c_{2l+1}}}=\frac{\frac{c_{2l+m+1}}{c_{2l+1}R^{m}}}{\sqrt{\frac{c_{2l+2m+1}}{c_{2l+1}R^{2m}}}}=\frac{\frac{c_{2l+2}}{c_{2l+1}R}\cdots\frac{c_{2l+m+1}}{c_{2l+m}R}}{\sqrt{\frac{c_{2l+2}}{c_{2l+1}R}\cdots\frac{c_{2l+2m+1}}{c_{2l+2m}R}}}\to 1,
    \end{align*}
    by the radius of convergence assumption. 
\end{proof}
When $R=\infty$, we make the additional assumption that Part 2 of Proposition 2.2 is true. It is not obvious to the author whether or not this is true in general. However, the result is true in the case of the Segal-Bargmann-Fock space. Continuing on, the following lemma captures the mass escaping at the boundary property.  
\begin{lem}
    The family of probability measures $d\mu_{n}$ on $[0,R)$ has a weak limit $\delta_{R}$ in the following sense: if $\sigma\in L^{\infty}([0,R))$ such that $\sigma(r)\to c<\infty$ as $r\to R^{-}$, then 
    \begin{align*}
        \int_{0}^{R}\sigma(r)d\mu_{n}(r)\to c,
    \end{align*}
    as $n\to\infty$. 
\end{lem}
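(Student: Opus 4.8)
The plan is to reduce the claim to a statement about the concentration of the $\mu_n$ near $R$, using Part~1 of Proposition~2.2 as the key input. Fix $\varepsilon>0$. Since $\sigma(r)\to c$ as $r\to R^-$, choose $\tilde R<R$ such that $|\sigma(r)-c|<\varepsilon$ for all $r\in(\tilde R,R)$. Then I would split the integral:
\begin{align*}
    \left|\int_{0}^{R}\sigma(r)\,d\mu_{n}(r) - c\right|
    \leq \int_{0}^{\tilde R}|\sigma(r)-c|\,d\mu_{n}(r) + \int_{\tilde R}^{R}|\sigma(r)-c|\,d\mu_{n}(r),
\end{align*}
where I have used that $\mu_n$ is a probability measure to write $c=\int_0^R c\,d\mu_n$.

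For the second term, the integrand is bounded by $\varepsilon$ and $\mu_n$ has total mass at most $1$, so that piece is at most $\varepsilon$. For the first term, bound $|\sigma(r)-c|$ by $\norm{\sigma}_{\infty}+|c|$ (a finite constant, since $\sigma\in L^\infty$ and the limit $c$ is finite), and then the first term is at most $(\norm{\sigma}_\infty+|c|)\,\mu_n([0,\tilde R))$. By Part~1 of Proposition~2.2, $\mu_n([0,\tilde R))\to 0$ as $n\to\infty$, so for $n$ large this is also at most $\varepsilon$. Combining, $\limsup_{n\to\infty}\left|\int_0^R\sigma\,d\mu_n - c\right|\leq 2\varepsilon$, and since $\varepsilon$ was arbitrary the limit is $c$.

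There is no real obstacle here: the entire argument is the standard "approximate identity" splitting, and all the analytic content has already been extracted into Proposition~2.2(1). The only points requiring a word of care are that $\mu_n$ is genuinely a probability measure on $[0,R)$ (so the constant $c$ can be pulled inside the integral against $d\mu_n$), which is immediate from $c_{2n+1}=\int_0^R r^{2n+1}\,d\mu(r)$, and that the tail bound from Proposition~2.2(1) is applied with the $\tilde R$ chosen from the convergence $\sigma(r)\to c$, which is legitimate since that proposition holds for every $\tilde R<R$.
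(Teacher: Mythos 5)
Your proof is correct and follows essentially the same route as the paper's: split the integral at a cutoff $\tilde R$ chosen from the convergence $\sigma(r)\to c$, bound the tail piece by $\varepsilon$, and kill the $[0,\tilde R)$ piece using Proposition~2.2(1). Your bound $\norm{\sigma}_\infty+|c|$ on the first piece is in fact slightly more careful than the paper's $2\max\{c,\norm{\sigma}_\infty\}$, which is not obviously an upper bound when $c$ is negative, but this is a cosmetic difference.
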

\begin{proof}
    Let $\sigma\in L^{\infty}([0,R))$ such that $\sigma(r)\to c<\infty$ as $r\to R^{-}$. Let $\varepsilon>0$. Then there is an $\Tilde{R}<R$ such that $|\sigma(r)-c|<\varepsilon/2$ when $r\in [\Tilde{R},R)$. Furthermore, by Proposition 2.2, there is an $N$ such that for $n\geq N$
    \begin{align*}
        \mu_{n}\left([0,\Tilde{R})\right)<\frac{\varepsilon}{2\max\{c,\norm{\sigma}_{\infty}\}}.
    \end{align*}
    Then, 
    \begin{align*}
        \left|\int_{0}^{R}\sigma(r)d\mu_{n}(r)-c\right|\leq&\left|\int_{0}^{\Tilde{R}}\left(\sigma(r)-c\right)d\mu_{n}(r)\right|+\left|\int_{\Tilde{R}}^{R}\left(\sigma(r)-c\right)d\mu_{n}(r)\right|\\
        \leq&2\text{max}\{c,\norm{\sigma}_{\infty}\}\mu_{n}\left([0,\Tilde{R})\right)+\varepsilon/2\int_{\Tilde{R}}^{R}d\mu_{n}(r)\\
        <&\varepsilon,
    \end{align*}
    and the result is established. 
\end{proof}
The Szeg\H{o} limits presented in the introduction can be interpreted as spectral averages in some sense, which we will take advantage of going forward. In particular, we will need a weak convergence for an average of measures which our next lemma provides. 
\begin{lem}
    Suppose $\sigma\in L^{\infty}([0,R))$ is such that $\sigma(r)\to c$ as $r\to R^{-}$. Let $\mu_{n}$ be as above. Then, 
    \begin{align*}
        \frac{1}{N+1}\sum_{n=0}^{N}\int_{0}^{R}\sigma(r)d\mu_{n}(r)\to c,
    \end{align*}
    as $N\to\infty$. 
\end{lem}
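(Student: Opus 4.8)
The plan is to deduce this averaged statement directly from Lemma 2.4, using the elementary fact that Cesàro averages inherit limits. Write $a_n := \int_0^R \sigma(r)\,d\mu_n(r)$. By Lemma 2.4, the hypothesis $\sigma(r)\to c$ as $r\to R^-$ guarantees $a_n \to c$ as $n\to\infty$. The quantity we must control is the Cesàro mean $\frac{1}{N+1}\sum_{n=0}^N a_n$, and the classical Cesàro summability result says that whenever a sequence converges to a limit, so do its Cesàro averages, and to the same limit. Thus $\frac{1}{N+1}\sum_{n=0}^N a_n \to c$ as $N\to\infty$, which is exactly the claim.

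If one prefers a self-contained argument rather than citing Cesàro summability, I would unfold it as follows. Fix $\varepsilon>0$. By Lemma 2.4 there is an $M$ so that $|a_n - c| < \varepsilon/2$ for all $n \geq M$. For $N \geq M$, split the sum:
\begin{align*}
    \left|\frac{1}{N+1}\sum_{n=0}^N a_n - c\right| \leq \frac{1}{N+1}\sum_{n=0}^{M-1}|a_n - c| + \frac{1}{N+1}\sum_{n=M}^N |a_n - c| \leq \frac{1}{N+1}\sum_{n=0}^{M-1}|a_n - c| + \frac{\varepsilon}{2}.
\end{align*}
The first term is a fixed numerator over $N+1$, so it tends to $0$ as $N\to\infty$; here I would note that each $|a_n|$ is bounded (indeed $|a_n| \leq \norm{\sigma}_\infty$ since $\mu_n$ is a probability measure), so the numerator $\sum_{n=0}^{M-1}|a_n-c|$ is a finite constant depending only on $M$, $c$, and $\norm{\sigma}_\infty$. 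Hence there is an $N_0 \geq M$ so that the first term is $< \varepsilon/2$ for $N \geq N_0$, giving $\left|\frac{1}{N+1}\sum_{n=0}^N a_n - c\right| < \varepsilon$ for all $N \geq N_0$.

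There is essentially no obstacle here: the lemma is a routine consequence of Lemma 2.4 and is stated only because the averaged form is what will feed into the Szeg\H{o} limit computation (where the $\frac{1}{N}\tr$ produces exactly such a Cesàro average over the eigenvalue contributions of the basis vectors $z^n/\sqrt{c_{2n+1}}$). The one minor point worth stating explicitly is that $\mu_n$ is a probability measure on $[0,R)$, so $|a_n| \leq \norm{\sigma}_\infty$ uniformly in $n$, which is what makes the finite initial segment of the Cesàro sum negligible in the limit.
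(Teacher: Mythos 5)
Your proposal is correct and follows exactly the paper's argument: the paper likewise deduces the statement from the preceding lemma (which gives $\int_0^R\sigma\,d\mu_n\to c$) together with Ces\`aro convergence of a convergent sequence, though it does so in one line without unfolding the $\varepsilon$--$N_0$ details as you do. Your added observation that $|a_n|\leq\norm{\sigma}_\infty$ because each $\mu_n$ is a probability measure is the right justification for discarding the finite initial segment.
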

\begin{proof}
    This is a simple consequence of Lemma 3.1 and Ces\`aro convergence of a sequence. 
\end{proof}
We now make the following simple observation. Note that 
\begin{align*}
    \ip{T_{\sigma}\frac{z^{n}}{\sqrt{c_{2n+1}}}}{\frac{z^{n}}{\sqrt{c_{2n+1}}}}=\int_{\mathbb{D}(0,R)}\sigma(z)\frac{1}{c_{2n+1}}|z|^{2n}d\nu(z)=\int_{0}^{R}\hat{\sigma}(r)d\mu_{n}(r),
\end{align*}
where 
\begin{align*}
    \hat{\sigma}(r)=\int_{0}^{2\pi}\sigma(re^{i\theta})\frac{d\theta}{2\pi}.
\end{align*}
Hence, if $\lim_{r\to R^{-}}\hat{\sigma}(r)$ exists, we may apply the above results to obtain the required Szeg\H{o} limit. This motivates the following definition.
\begin{definition}
    Let $\sigma\in L^{\infty}(\mathbb{D}(0,R))$ be real-valued. We say $\sigma$ has a radial limit if $\lim_{r\to R^{-}}\sigma(re^{i\theta})$ is well defined and we write $\Tilde{\sigma}(\theta):=\lim_{r\to R^{-}}\sigma(re^{i\theta})$. 
\end{definition}
It is well known that such limits exist pointwise a.e. in the holomorphic and continuous cases. We now begin to construct our Szeg\H{o} Limit Theorem. 
\section{A Szeg\H{o}-type Limit Theorem}
\begin{prop}[Averaging Theorem]
    Suppose $\sigma\in L^{\infty}(\mathbb{D}(0,R))$ has a radial limit $\Tilde{\sigma}$. Then,  
    \begin{align*}
        \lim_{N\to\infty}\frac{1}{N+1}\text{tr}\left(P_{N}T_{\sigma}P_{N}\right)=\int_{0}^{2\pi}\Tilde{\sigma}(\theta)\frac{d\theta}{2\pi}.
    \end{align*} 
\end{prop}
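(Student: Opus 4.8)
The plan is to reduce the trace of $P_N T_\sigma P_N$ to a Ces\`aro average of the diagonal inner products $\langle T_\sigma z^n/\sqrt{c_{2n+1}}, z^n/\sqrt{c_{2n+1}}\rangle$, and then invoke the measure-theoretic lemmas already established. Concretely, since $\{z^n/\sqrt{c_{2n+1}}\}_{n=0}^N$ is an orthonormal basis for $\mathcal{K}_N$, the trace of the compression is exactly
\begin{align*}
    \text{tr}\left(P_N T_\sigma P_N\right) = \sum_{n=0}^{N}\ip{T_\sigma \frac{z^n}{\sqrt{c_{2n+1}}}}{\frac{z^n}{\sqrt{c_{2n+1}}}} = \sum_{n=0}^{N}\int_0^R \hat\sigma(r)\, d\mu_n(r),
\end{align*}
where the second equality is the ``simple observation'' computed in the excerpt just before Definition 3.2. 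Dividing by $N+1$ gives
\begin{align*}
    \frac{1}{N+1}\text{tr}\left(P_N T_\sigma P_N\right) = \frac{1}{N+1}\sum_{n=0}^{N}\int_0^R \hat\sigma(r)\, d\mu_n(r).
\end{align*}

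Next I would verify the hypotheses needed to apply Lemma 3.3 to the integrand $\hat\sigma$. Since $\sigma \in L^\infty(\mathbb{D}(0,R))$, Fubini gives $\hat\sigma \in L^\infty([0,R))$ with $\norm{\hat\sigma}_\infty \leq \norm{\sigma}_\infty$. The assumption that $\sigma$ has a radial limit $\Tilde\sigma$ means $\sigma(re^{i\theta}) \to \Tilde\sigma(\theta)$ as $r\to R^-$; I need the averaged version $\hat\sigma(r) = \int_0^{2\pi}\sigma(re^{i\theta})\frac{d\theta}{2\pi} \to \int_0^{2\pi}\Tilde\sigma(\theta)\frac{d\theta}{2\pi} =: c$ as $r\to R^-$. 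This follows from the dominated convergence theorem, using the uniform bound $|\sigma(re^{i\theta})| \leq \norm{\sigma}_\infty$ — provided the radial convergence is taken in a suitable a.e. sense (as the remark after Definition 3.5 notes, this holds in the continuous and holomorphic cases). With $\hat\sigma$ thus shown to have the limit $c = \int_0^{2\pi}\Tilde\sigma(\theta)\frac{d\theta}{2\pi}$ at $R^-$, Lemma 3.3 applies directly and yields
\begin{align*}
    \frac{1}{N+1}\sum_{n=0}^{N}\int_0^R \hat\sigma(r)\, d\mu_n(r) \to c = \int_0^{2\pi}\Tilde\sigma(\theta)\frac{d\theta}{2\pi},
\end{align*}
which is exactly the claimed limit.

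The main obstacle, as flagged above, is making the passage from pointwise (a.e.) radial convergence of $\sigma$ to genuine convergence of the angular average $\hat\sigma(r)$ fully rigorous: one needs a dominated-convergence argument along the radii, and the radial limit hypothesis must be strong enough (e.g. $\sigma$ continuous up to the boundary, or the $\Tilde\sigma$ a radial boundary value in the holomorphic sense) for this to go through cleanly. In the worst case where radial limits exist only a.e., one should note that the escaping-mass property in Proposition 2.2(1) still lets only the behavior near $r = R$ matter, and combine this with the $L^\infty$ bound. Everything else is bookkeeping: the trace identity is immediate from orthonormality, and the Ces\`aro passage is handled wholesale by Lemma 3.3.
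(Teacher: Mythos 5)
Your proposal is correct and follows essentially the same route as the paper: reduce the trace to the Ces\`aro average of $\int_0^R \hat\sigma(r)\,d\mu_n(r)$ and invoke the weak-convergence lemmas for the measures $\mu_n$. The only difference is that you explicitly supply the dominated-convergence step showing $\hat\sigma(r)\to\int_0^{2\pi}\Tilde{\sigma}(\theta)\frac{d\theta}{2\pi}$, a point the paper's proof passes over silently, so if anything your write-up is slightly more complete.
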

\begin{proof}
    First, note that 
    \begin{align*}
        \frac{1}{N+1}\text{tr}\left(P_{N}T_{\sigma}P_{N}\right)=&\frac{1}{N+1}\sum_{n=0}^{N}\ip{T_{\sigma}\frac{z^{n}}{\sqrt{c_{2n+1}}}}{\frac{z^{n}}{\sqrt{c_{2n+1}}}}\\
        =&\frac{1}{N+1}\sum_{n=0}^{N}\int_{\mathbb{D}(0,R)}\sigma(z)\frac{1}{c_{2n+1}}|z|^{2n}\mu(|z|)\frac{dz}{\pi}\\
        =&\frac{1}{N+1}\sum_{n=0}^{N}\int_{0}^{R}\hat{\sigma}(r)d\mu_{n}(r),
    \end{align*}
    where 
    \begin{align*}
        \hat{\sigma}(r)=\int_{0}^{2\pi}\sigma(re^{i\theta})\frac{d\theta}{2\pi}.
    \end{align*}
    Applying Lemmas 2.3 and 2.4 give the result. 
\end{proof}
Before proceeding to our Szeg\H{o} limit result and the spectral density, we require two approximation results. The first, which is essentially a result of Janssen and Zelditch \cite{janssen1983szegHo}, gives an approximation of the difference of the traces of monomials of the radially-compressed Toeplitz operators.
\begin{lem}
    Let $\sigma,\eta\in L^{\infty}(\mathbb{D}(0,R))$, $\sigma,\eta$ real valued and let $P_{N}$ be as above. Then, for each $k\in\N$
    \begin{align*}
        \left|\text{tr}\left(\left(P_{N}T_{\sigma}P_{N}\right)^{k}\right)-\text{tr}\left(\left(P_{N}T_{\eta}P_{N}\right)^{k}\right)\right|\lesssim_{k}&\sum_{n=0}^{N}\int_{\mathbb{D}(0,R)}|\sigma(z)-\eta(z)|\frac{1}{c_{2n+1}}|z|^{2n}d\nu(z).
    \end{align*}
\end{lem}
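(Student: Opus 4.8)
The plan is a telescoping argument that reduces everything to a trace-norm bound on $P_{N}T_{\sigma-\eta}P_{N}$, which in turn is handled by splitting $\sigma-\eta$ into its positive and negative parts. Write $A:=P_{N}T_{\sigma}P_{N}$ and $B:=P_{N}T_{\eta}P_{N}$, both self-adjoint operators on the finite-dimensional space $\mathcal{K}_{N}$, with $\|A\|\le\|\sigma\|_{\infty}$ and $\|B\|\le\|\eta\|_{\infty}$; put $M:=\max\{\|\sigma\|_{\infty},\|\eta\|_{\infty}\}$. Since the symbol map is linear and $P_{N}$ projects onto a subspace of $\mathcal{H}_{c}$, the operator $A-B=P_{N}T_{\sigma-\eta}P_{N}$ coincides, as an operator on $\mathcal{K}_{N}$, with $\Pi_{N}M_{\sigma-\eta}\Pi_{N}$, where $\Pi_{N}$ is the orthogonal projection of $L^{2}(\mathbb{D}(0,R),\nu)$ onto $\mathcal{K}_{N}$; this is a straightforward check using that each monomial $z^{n}/\sqrt{c_{2n+1}}$ lies in $\mathcal{H}_{c}$, so that $P$ acts as the identity on $\mathcal{K}_{N}$.

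First I would invoke the elementary telescoping identity
\begin{align*}
    A^{k}-B^{k}=\sum_{j=0}^{k-1}A^{j}(A-B)B^{k-1-j},
\end{align*}
apply $\text{tr}(\cdot)$, and use cyclicity of the trace (legitimate since every operator here is finite rank) to rewrite each summand as $\text{tr}\big((A-B)\,B^{k-1-j}A^{j}\big)$. The Hölder-type bound $|\text{tr}(XY)|\le\|X\|_{1}\|Y\|$ together with $\|B^{k-1-j}A^{j}\|\le M^{k-1}$ then yields
\begin{align*}
    \left|\text{tr}(A^{k})-\text{tr}(B^{k})\right|\le k\,M^{k-1}\,\|A-B\|_{1}=k\,M^{k-1}\,\big\|\Pi_{N}M_{\sigma-\eta}\Pi_{N}\big\|_{1}.
\end{align*}

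The key step is bounding this trace norm. Writing $\sigma-\eta=\phi_{+}-\phi_{-}$ with $\phi_{\pm}\ge 0$ the positive and negative parts, we get $\Pi_{N}M_{\sigma-\eta}\Pi_{N}=\Pi_{N}M_{\phi_{+}}\Pi_{N}-\Pi_{N}M_{\phi_{-}}\Pi_{N}$, a difference of positive semidefinite operators, since $\ip{\Pi_{N}M_{\phi_{\pm}}\Pi_{N}x}{x}=\int_{\mathbb{D}(0,R)}\phi_{\pm}|\Pi_{N}x|^{2}d\nu\ge 0$. By the triangle inequality for $\|\cdot\|_{1}$ and the identity $\|S\|_{1}=\text{tr}(S)$ for $S\ge 0$,
\begin{align*}
    \big\|\Pi_{N}M_{\sigma-\eta}\Pi_{N}\big\|_{1}\le\text{tr}\big(\Pi_{N}M_{\phi_{+}}\Pi_{N}\big)+\text{tr}\big(\Pi_{N}M_{\phi_{-}}\Pi_{N}\big)=\text{tr}\big(\Pi_{N}M_{|\sigma-\eta|}\Pi_{N}\big).
\end{align*}
Evaluating the last trace in the orthonormal basis $\{z^{n}/\sqrt{c_{2n+1}}\}_{n=0}^{N}$ of $\mathcal{K}_{N}$ and using $\big|z^{n}/\sqrt{c_{2n+1}}\big|^{2}=|z|^{2n}/c_{2n+1}$ gives precisely $\sum_{n=0}^{N}\int_{\mathbb{D}(0,R)}|\sigma(z)-\eta(z)|\frac{1}{c_{2n+1}}|z|^{2n}d\nu(z)$, and combining with the previous display proves the lemma with implied constant $k\,M^{k-1}$.

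I do not expect a serious obstacle: this is the standard telescoping-plus-positivity estimate underlying such perturbation lemmas. The only points requiring care are the bookkeeping of the various projections in identifying $A-B$ with $\Pi_{N}M_{\sigma-\eta}\Pi_{N}$, and the observation that every operator in sight has finite rank, so that the trace, its cyclicity, and the trace-norm inequalities are all unambiguously valid and no convergence or domain issues arise.
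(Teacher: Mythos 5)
Your proposal is correct and follows essentially the same route as the paper: reduce to the trace norm of $P_{N}T_{\sigma-\eta}P_{N}$ via a telescoping/binomial expansion of $A^{k}-B^{k}$ together with $\norm{XY}_{1}\leq\norm{X}_{1}\norm{Y}$, then split $\sigma-\eta$ into positive and negative parts, use positivity to convert trace norms into traces, and evaluate in the monomial basis. The only cosmetic difference is that you make the telescoping sum and the constant $kM^{k-1}$ explicit where the paper cites Janssen--Zelditch for the "sum of terms each containing a factor of $A-B$" step.
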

\begin{proof}
    The first part of this proof comes from Janssen and Zelditch \cite{janssen1983szegHo}. Note that for any two bounded operators $A$ and $B$, $A^{k}-B^{k}=\left((A-B)+B\right)^{k}-B^{k}=\text{sum of terms}$, each of which has at least one factor of $A-B$. Hence, using the inequalities $\norm{AB}_{1}\leq\norm{A}\norm{B}_{1}$, we obtain
    \begin{align*}
        \left|\text{tr}\left(\left(P_{N}T_{\sigma}P_{N}\right)^{k}\right)-\text{tr}\left(\left(P_{N}T_{\eta}P_{N}\right)^{k}\right)\right|\leq&\norm{\left(P_{N}T_{\sigma}P_{N}\right)^{k}-\left(P_{N}T_{\eta}P_{N}\right)^{k}}_{1}\\
        \leq&C\norm{P_{N}\left(T_{\sigma}-T_{\eta}\right)P_{N}}_{1}\\
        =&C\norm{P_{N}T_{\sigma-\eta}P_{N}}_{1},
    \end{align*}
    where $C$ is a constant independent of $N$. Now, noting that $(\sigma-\eta)=(\sigma-\eta)_{+}-(\sigma-\eta)_{-}$, and applying the triangle inequality for the trace norm twice, we obtain
    \begin{align*}
        C\norm{P_{N}T_{\sigma-\eta}P_{N}}_{1}\lesssim&\norm{P_{N}T_{(\sigma-\eta)_{+}}P_{N}}+\norm{P_{N}T_{(\sigma-\eta)_{-}}P_{N}}\\
        =&\text{tr}\left(P_{N}T_{\left(\sigma-\eta\right)_{+}}P_{N}\right)+\text{tr}\left(P_{N}T_{\left(\sigma-\eta\right)_{-}}P_{N}\right)\\
        =&\text{tr}\left(P_{N}T_{|\sigma-\eta|}P_{N}\right)\\
        =&\sum_{n=0}^{N}\int_{\mathbb{D}(0,R)}|\sigma(z)-\eta(z)|\frac{1}{c_{2n+1}}|z|^{2n}d\nu(z).
    \end{align*}
\end{proof}
Our second approximation result is standard, and will be used for a Stone-Weierstrauss argument in our main result. The result is standard, and we include it without proof.
\begin{lem}
    Let $\mathcal{H}$ be a finite-dimensional Hilbert space. Suppose $\psi:\mathbb{R}\to\mathbb{C}$ is continuous, and $A:\mathcal{H}\to\mathcal{H}$ is bounded and self-adjoint. Then, 
    \begin{align*}
        \left|\text{tr}\left(\psi\left(A\right)\right)\right|\leq&\text{dim}(\mathcal{H})\sup\{|\psi(r)|:r\in\text{spec}(A)\},
    \end{align*}
    where $\text{spec}(A)$ denotes the spectrum of $A$. 
\end{lem}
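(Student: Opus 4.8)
The plan is to diagonalize $A$ and thereby reduce the statement to the scalar triangle inequality. Since $A$ is a self-adjoint operator on the finite-dimensional Hilbert space $\mathcal{H}$, the spectral theorem supplies an orthonormal basis $\{e_{1},\dots,e_{d}\}$ of $\mathcal{H}$, with $d=\dim(\mathcal{H})$, consisting of eigenvectors of $A$, say $Ae_{j}=\lambda_{j}e_{j}$ with each $\lambda_{j}\in\mathbb{R}$; equivalently $A=\sum_{j}\lambda_{j}P_{j}$, where the $P_{j}$ are the orthogonal projections onto the eigenspaces, mutually orthogonal and summing to the identity.

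Next I would pin down what $\psi(A)$ means: since $\psi$ is only assumed continuous one invokes the functional calculus, but in finite dimensions this is simply $\psi(A)=\sum_{j}\psi(\lambda_{j})P_{j}$, so each $e_{j}$ remains an eigenvector of $\psi(A)$ with eigenvalue $\psi(\lambda_{j})$. Computing the trace in the basis $\{e_{j}\}$ then gives
\begin{align*}
    \left|\tr\left(\psi(A)\right)\right|=\left|\sum_{j=1}^{d}\ip{\psi(A)e_{j}}{e_{j}}\right|=\left|\sum_{j=1}^{d}\psi(\lambda_{j})\right|\leq\sum_{j=1}^{d}|\psi(\lambda_{j})|\leq d\max_{1\leq j\leq d}|\psi(\lambda_{j})|.
\end{align*}
Because every $\lambda_{j}$ lies in $\text{spec}(A)$, the last quantity is at most $d\sup\{|\psi(r)|:r\in\text{spec}(A)\}$, which is exactly the asserted bound.

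There is no genuine obstacle here; the statement is elementary once the spectral decomposition is available. The only point deserving a sentence is the justification that the continuous functional calculus coincides with the ``apply $\psi$ to the eigenvalues'' recipe: from the relations $P_{i}P_{j}=\delta_{ij}P_{i}$ and $\sum_{j}P_{j}=I$ one has $p(A)=\sum_{j}p(\lambda_{j})P_{j}$ for every polynomial $p$, and since $\text{spec}(A)$ is a finite set, Lagrange interpolation produces a polynomial $p$ with $p(\lambda_{j})=\psi(\lambda_{j})$ for all $j$, whence $\psi(A)=p(A)=\sum_{j}\psi(\lambda_{j})P_{j}$. With that convention fixed the computation above is complete.
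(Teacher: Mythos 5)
Your argument is correct and is the standard one; the paper itself states this lemma without proof, remarking only that it is standard, so there is nothing to compare against. Your diagonalization via the spectral theorem, the identification of $\psi(A)$ with $\sum_{j}\psi(\lambda_{j})P_{j}$ (justified by Lagrange interpolation on the finite spectrum), and the triangle-inequality estimate on the trace constitute exactly the proof the author presumably has in mind.
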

Finally, combining the above lemmas we arrive at the main result of this note.
\begin{thm}
    Suppose $\sigma\in L^{\infty}(\mathbb{D}(0,R))$ is real-valued and has a radial limit. Then, for any continuous $\psi:[\inf\sigma,\sup\sigma]\to\mathbb{C}$ 
    \begin{align*}
        \lim_{N\to\infty}\frac{1}{N+1}\text{tr}\left(\psi\left(P_{N}T_{\sigma}P_{N}\right)\right)=\int_{0}^{2\pi}\psi\left(\Tilde{\sigma}(\theta)\right)\frac{d\theta}{2\pi}.
    \end{align*}
\end{thm}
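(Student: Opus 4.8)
The plan is to reduce the identity, in three stages, to a classical Szeg\H{o} limit theorem for Toeplitz matrices. First I would reduce to monomials $\psi(x)=x^{k}$. Since $\mathcal{K}_{N}\subseteq\mathcal{H}_{c}$ and on $\mathcal{H}_{c}$ the norm $\norm{\cdot}_{c}$ coincides with the $L^{2}(\nu)$-norm, for any $\norm{\cdot}_{c}$-unit $v\in\mathcal{K}_{N}$ one has $\ip{P_{N}T_{\sigma}P_{N}v}{v}=\int_{\mathbb{D}(0,R)}\sigma\,|v|^{2}\,d\nu\in[\inf\sigma,\sup\sigma]$, so $\text{spec}(P_{N}T_{\sigma}P_{N})\subseteq[\inf\sigma,\sup\sigma]$. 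Combined with Lemma 3.3 this gives $\tfrac{1}{N+1}|\tr(\psi(P_{N}T_{\sigma}P_{N}))-\tr(p(P_{N}T_{\sigma}P_{N}))|\le\norm{\psi-p}_{\infty,[\inf\sigma,\sup\sigma]}$ for every polynomial $p$, while $|\int_{0}^{2\pi}(\psi-p)(\Tilde{\sigma})\tfrac{d\theta}{2\pi}|\le\norm{\psi-p}_{\infty,[\inf\sigma,\sup\sigma]}$ because $\Tilde{\sigma}(\theta)\in[\inf\sigma,\sup\sigma]$ a.e. By the Weierstrass approximation theorem it is then enough to prove the theorem for $\psi$ a polynomial, hence by linearity for $\psi(x)=x^{k}$, the case $k=0$ being trivial and $k=1$ being Proposition 3.1.

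Fix $k\ge1$ and let $\eta(z):=\Tilde{\sigma}(\arg z)$ for $z\neq 0$; then $\eta\in L^{\infty}(\mathbb{D}(0,R))$ has radial limit $\Tilde{\sigma}$ as well. Expanding the right-hand side of Lemma 3.2 in polar coordinates,
\[
\tfrac{1}{N+1}\bigl|\tr((P_{N}T_{\sigma}P_{N})^{k})-\tr((P_{N}T_{\eta}P_{N})^{k})\bigr|\lesssim_{k}\tfrac{1}{N+1}\sum_{n=0}^{N}\int_{0}^{R}h(r)\,d\mu_{n}(r),
\]
where $h(r)=\tfrac{1}{2\pi}\int_{0}^{2\pi}|\sigma(re^{i\theta})-\Tilde{\sigma}(\theta)|\,d\theta$. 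Since $\sigma$ is bounded and $\sigma(re^{i\theta})\to\Tilde{\sigma}(\theta)$ for a.e.\ $\theta$, dominated convergence shows $h\in L^{\infty}([0,R))$ and $h(r)\to 0$ as $r\to R^{-}$ (for continuous $\sigma$ with $R<\infty$ this is just uniform continuity on the closed disc). Lemmas 2.3 and 2.4 then drive the displayed bound to $0$, so it remains to compute $\lim_{N\to\infty}\tfrac{1}{N+1}\tr((P_{N}T_{\eta}P_{N})^{k})$.

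In the orthonormal basis $\{z^{n}/\sqrt{c_{2n+1}}\}_{n=0}^{N}$ of $\mathcal{K}_{N}$, a direct computation gives
\[
\ip{T_{\eta}\tfrac{z^{n}}{\sqrt{c_{2n+1}}}}{\tfrac{z^{m}}{\sqrt{c_{2m+1}}}}=w_{m,n}\,\widehat{\Tilde{\sigma}}(n-m),\qquad w_{m,n}:=\frac{c_{n+m+1}}{\sqrt{c_{2n+1}\,c_{2m+1}}},
\]
where $\widehat{\Tilde{\sigma}}(\,\cdot\,)$ denotes the Fourier coefficients of $\Tilde{\sigma}$. Cauchy--Schwarz for $d\mu$ gives $0<w_{m,n}\le1$, and Proposition 2.2(2) (which is the standing hypothesis when $R=\infty$) gives $w_{m,n}\to1$ as $\min(m,n)\to\infty$ with $m-n$ held fixed. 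Thus $P_{N}T_{\eta}P_{N}$ is the $(N+1)\times(N+1)$ Toeplitz matrix with real symbol $\Tilde{\sigma}$, modulated by a factor that converges to $1$ along each diagonal; this is precisely the situation covered by the Szeg\H{o}-type trace formula of \cite{bourget2017first}, which yields $\tfrac{1}{N+1}\tr((P_{N}T_{\eta}P_{N})^{k})\to\tfrac{1}{2\pi}\int_{0}^{2\pi}\Tilde{\sigma}(\theta)^{k}\,d\theta$. Putting the three stages together proves the theorem.

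I expect the middle stage to be the main obstacle: the content there is converting the merely pointwise-a.e.\ radial convergence $\sigma(re^{i\theta})\to\Tilde{\sigma}(\theta)$ into averaged $L^{1}(d\mu_{n})$-smallness of $\sigma-\eta$, which is exactly where the ``mass escaping to the boundary'' of the probability measures $\mu_{n}$ (Lemmas 2.3 and 2.4) is used, after a dominated-convergence step in the angular variable. By contrast, once $w_{m,n}$ has been shown to be bounded and diagonal-wise convergent to $1$, the Szeg\H{o} input of \cite{bourget2017first} can be invoked as a black box.
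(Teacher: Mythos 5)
Your proposal is correct and follows essentially the same route as the paper: reduce to monomials via Lemma 3.3 and Stone--Weierstrass, compare $T_{\sigma}$ with $T_{\Tilde{\sigma}}$ using Lemma 3.2 together with Lemmas 2.3 and 2.4, and identify $P_{N}T_{\Tilde{\sigma}}P_{N}$ with a Toeplitz matrix modulated by the weights $c_{n+m+1}/\sqrt{c_{2n+1}c_{2m+1}}$ so that the result of \cite{bourget2017first} applies. The extra details you supply (the spectral inclusion in $[\inf\sigma,\sup\sigma]$, the dominated-convergence step for $h(r)\to 0$, and the Cauchy--Schwarz bound $w_{m,n}\le 1$) are welcome refinements of steps the paper leaves implicit, but they do not change the argument.
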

\begin{proof}
    We will first prove the result for monomials, and then employ a Stone-Weierstrauss argument to transition to continuous functions, via Lemma $3.3$. Let $k\in\N$, as the result is trivially true for $k=0$. Let $\Tilde{\sigma}$ denote the radial limit of $\sigma$ as a function on $\mathbb{D}(0,R)$, i.e. $\Tilde{\sigma}(z)=\Tilde{\sigma}({arg}(z))$. Then, applying Lemma $3.2$, we have 
    \begin{align*}
        &\left|\frac{1}{N+1}\text{tr}\left(\left(P_{N}T_{\sigma}P_{N}\right)^{k}\right)-\frac{1}{N+1}\text{tr}\left(\left(P_{N}T_{\Tilde{\sigma}}P_{N}\right)^{k}\right)\right|\\
        \lesssim&\frac{1}{N+1}\sum_{n=0}^{N}\int_{\mathbb{D}(0,R)}|\sigma(z)-\Tilde{\sigma}(z)|\frac{1}{c_{2n+1}}|z|^{2n}d\nu(z)\\
        =&\frac{1}{N+1}\sum_{n=0}^{N}\int_{0}^{R}\left(\int_{0}^{2\pi}|\sigma(\sqrt{r}e^{i\theta})-\Tilde{\sigma}(e^{i\theta})|\frac{d\theta}{2\pi}\right)d\mu_{n}(r).
    \end{align*}
    Thus, by applying the given assumptions along with Lemma $2.4$, we have that this term goes to zero as $N\to\infty$. Hence, 
    \begin{align*}
       \frac{1}{N+1}\text{tr}\left(\left(P_{N}T_{\sigma}P_{N}\right)^{k}\right)\sim \frac{1}{N+1}\text{tr}\left(\left(P_{N}T_{\Tilde{\sigma}}P_{N}\right)^{k}\right).
    \end{align*}
   We now consider the right-most term. The operator $P_{N}T_{\Tilde{\sigma}}P_{N}$ can be identified with a matrix whose $(k,\ell)$ entry is 
   \begin{align*}
       \hat{\sigma}(k-l)\frac{c_{k+\ell+1}}{\sqrt{c_{2k+1}}\sqrt{c_{2\ell+1}}}.
   \end{align*}
    However, the result holds in this case by appealing to \cite{bourget2017first} (see the Theorem 3.5 in the case of density-1 sequences) and Lemma $3.2$ by either Proposition $2.2$ when $R<\infty$ or the assumption when $R=\infty$. Thus, the result is established for monomials. The result is further established for any complex polynomial due to the additivity of the trace. Finally, the result in full generality follows by appealing to Lemma $3.3$ and applying the Stone-Weierstrauss Theorem on the set $[\inf\sigma,\sup\sigma]$.
\end{proof}
Using an approximation argument of \cite{FN01} we can obtain the following general spectral density result. 
\begin{cor}
    Let $\inf\sigma<\alpha<\beta<\sup\sigma$ with either $\alpha>0$ or $\beta<0$, and suppose $\sigma\in L^{\infty}(\mathbb{D})$ is real-valued such that $|\{\Tilde{\sigma}=\alpha\}|=|\{\Tilde{\sigma}=\beta\}|=0$. Then, 
    \begin{align*}
        \lim_{N\to\infty}\frac{\#\{0\leq j\leq N:\alpha<\lambda_{j}^{N}(\sigma)<\beta\}}{N+1}=\frac{\left|\{\alpha<\Tilde{\sigma}(\theta)<\beta\}\right|}{2\pi}.
    \end{align*}
\end{cor}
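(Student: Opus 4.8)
The plan is to deduce the spectral-density (Weyl-law) statement from Theorem 3.4 by the standard device of approximating the indicator function $\mathbf{1}_{(\alpha,\beta)}$ from above and below by continuous functions, exactly as in \cite{FN01}. First I would fix $\varepsilon>0$ and choose continuous functions $\psi_{\varepsilon}^{-},\psi_{\varepsilon}^{+}:[\inf\sigma,\sup\sigma]\to[0,1]$ with $\psi_{\varepsilon}^{-}\le \mathbf{1}_{(\alpha,\beta)}\le\psi_{\varepsilon}^{+}$, such that $\psi_{\varepsilon}^{-}$ vanishes outside $(\alpha,\beta)$ and equals $1$ on $[\alpha+\varepsilon,\beta-\varepsilon]$, while $\psi_{\varepsilon}^{+}$ equals $1$ on $[\alpha,\beta]$ and vanishes outside $(\alpha-\varepsilon,\beta+\varepsilon)$. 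Since $\lambda_{j}^{N}(\sigma)\in\text{spec}(P_{N}T_{\sigma}P_{N})\subseteq[\inf\sigma,\sup\sigma]$ (the compression of a self-adjoint operator has spectrum in the numerical range of the symbol), we have the sandwich
\begin{align*}
    \frac{1}{N+1}\sum_{j=0}^{N}\psi_{\varepsilon}^{-}\!\left(\lambda_{j}^{N}(\sigma)\right)\le\frac{\#\{0\le j\le N:\alpha<\lambda_{j}^{N}(\sigma)<\beta\}}{N+1}\le\frac{1}{N+1}\sum_{j=0}^{N}\psi_{\varepsilon}^{+}\!\left(\lambda_{j}^{N}(\sigma)\right),
\end{align*}
and the two outer quantities are precisely $\frac{1}{N+1}\text{tr}(\psi_{\varepsilon}^{\pm}(P_{N}T_{\sigma}P_{N}))$ because $P_{N}T_{\sigma}P_{N}$ is self-adjoint on the finite-dimensional space $\mathcal{K}_{N}$.

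Next I would apply Theorem 3.4 to each of $\psi_{\varepsilon}^{\pm}$ (both are continuous on $[\inf\sigma,\sup\sigma]$), obtaining
\begin{align*}
    \lim_{N\to\infty}\frac{1}{N+1}\text{tr}\left(\psi_{\varepsilon}^{\pm}\left(P_{N}T_{\sigma}P_{N}\right)\right)=\int_{0}^{2\pi}\psi_{\varepsilon}^{\pm}\left(\Tilde{\sigma}(\theta)\right)\frac{d\theta}{2\pi}.
\end{align*}
Taking $\limsup$ and $\liminf$ as $N\to\infty$ in the sandwich pins the quantity of interest between $\frac{1}{2\pi}\int_{0}^{2\pi}\psi_{\varepsilon}^{-}(\Tilde{\sigma}(\theta))\,d\theta$ and $\frac{1}{2\pi}\int_{0}^{2\pi}\psi_{\varepsilon}^{+}(\Tilde{\sigma}(\theta))\,d\theta$. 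Now I would let $\varepsilon\to 0$: pointwise, $\psi_{\varepsilon}^{\pm}(\Tilde{\sigma}(\theta))\to\mathbf{1}_{(\alpha,\beta)}(\Tilde{\sigma}(\theta))$ for every $\theta$ with $\Tilde{\sigma}(\theta)\notin\{\alpha,\beta\}$, and by hypothesis $|\{\Tilde{\sigma}=\alpha\}|=|\{\Tilde{\sigma}=\beta\}|=0$, so the exceptional set has measure zero; dominated convergence (dominating function $\equiv 1$ on $[0,2\pi]$) then gives that both bounds converge to $\frac{1}{2\pi}|\{\theta:\alpha<\Tilde{\sigma}(\theta)<\beta\}|$, which forces the limit to exist and equal that value. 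The role of the hypothesis $\alpha>0$ or $\beta<0$ is to ensure $0\notin(\alpha,\beta)$, so that $\mathbf{1}_{(\alpha,\beta)}$ can be uniformly approximated by continuous functions in a way compatible with the functional calculus even though $P_{N}T_{\sigma}P_{N}$ may have a large kernel or spectrum accumulating at $0$; more precisely it guarantees the approximating $\psi_{\varepsilon}^{\pm}$ can be chosen continuous on all of $[\inf\sigma,\sup\sigma]$ without a jump being forced at $0$.

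The only real subtlety — and the step I would treat most carefully — is the passage from Theorem 3.4, which is stated for functions continuous on $[\inf\sigma,\sup\sigma]$, to the conclusion about the counting measure: one must make sure the eigenvalues $\lambda_{j}^{N}(\sigma)$ genuinely lie in $[\inf\sigma,\sup\sigma]$ so that evaluating $\psi_{\varepsilon}^{\pm}$ at them is legitimate and $\sum_{j}\psi_{\varepsilon}^{\pm}(\lambda_{j}^{N})=\text{tr}(\psi_{\varepsilon}^{\pm}(P_{N}T_{\sigma}P_{N}))$. This follows from $\inf\sigma\le\langle P_{N}T_{\sigma}P_{N}x,x\rangle/\|x\|^{2}=\langle T_{\sigma}P_{N}x,P_{N}x\rangle/\|x\|^{2}\le\sup\sigma$ for $x\in\mathcal{K}_{N}$, together with self-adjointness. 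Everything else is the routine outer-regularity argument; no new estimate beyond Theorem 3.4 is needed.
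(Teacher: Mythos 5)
Your proposal is correct and follows essentially the same route as the paper: sandwich the indicator $\mathbf{1}_{(\alpha,\beta)}$ between continuous functions $\psi_{\varepsilon}^{\pm}$, apply the main Szeg\H{o} limit theorem to each, and use the hypothesis $|\{\Tilde{\sigma}=\alpha\}|=|\{\Tilde{\sigma}=\beta\}|=0$ to collapse the gap as $\varepsilon\to 0$ (the paper phrases this last step via an explicit bound on $\int(\psi_{\varepsilon,2}-\psi_{\varepsilon,1})(\Tilde{\sigma})\,d\theta$ rather than dominated convergence, but the content is identical). The only quibble is your speculative gloss on the role of the hypothesis ``$\alpha>0$ or $\beta<0$,'' which is not actually load-bearing in either your argument or the paper's; this does not affect correctness.
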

\begin{proof}
    Without loss of generality, we assume that $\norm{\sigma}_{\infty}=1$. Let $\chi=\chi_{\alpha,\beta}$ be the indicator function on $(\alpha,\beta)$. Let $\varepsilon>0$. There are continuous functions $\psi_{\varepsilon,1},\psi_{\varepsilon,2}$ such that $\psi_{\varepsilon,1}(x)\leq\chi(x)\leq\psi_{\varepsilon,2}(x)\leq \frac{1}{\max{|\alpha|,|\beta|}}$ and $\psi_{\varepsilon,1}$ and $\psi_{\varepsilon,2}$ coincide with $\chi$ outside of the intervals $[\alpha-\varepsilon/2,\alpha+\varepsilon/2]$ and $[\beta-\varepsilon/2,\beta+\varepsilon/2]$. By Theorem 3.7, we have 
    \begin{align*}
        \frac{1}{N+1}\text{tr}\left(\left(\psi_{\varepsilon,2}-\psi_{\varepsilon,1}\right)\left(P_{N}T_{\sigma}P_{N}\right)\right)\to\int_{0}^{2\pi}\left(\psi_{\varepsilon,2}-\psi_{\varepsilon,1}\right)\left(\Tilde{\sigma}(\theta)\right)\frac{d\theta}{2\pi}.
    \end{align*}
    Let 
    \begin{align*}
        c(\gamma,\varepsilon)=\frac{|\{\gamma-\varepsilon/2\leq\Tilde{\sigma}(\theta)\leq\gamma+\varepsilon/2\}|}{2\pi}.
    \end{align*}
    Note that $c(\gamma,\varepsilon)\to 0$ as $\varepsilon\to0$ by the hypothesis, and  
    \begin{align*}
        \int_{0}^{2\pi}\left(\psi_{\varepsilon,2}-\psi_{\varepsilon,1}\right)\left(\Tilde{\sigma}(\theta)\right)\frac{d\theta}{2\pi}\leq\frac{c(\alpha,\varepsilon)+c(\beta,\varepsilon)}{2\pi}\frac{1}{\max{\{|\alpha|,|\beta|\}}}.
    \end{align*}
    Thus, letting $\varepsilon\to 0$ we see that the limit in the statement of this theorem exists, and must equal the desired quantity. 
\end{proof}
\section{Applications}
We now cover two examples discussed in the introduction. 
\subsection{The Bergman Space $A^{2}(\mathbb{D})$}
The Bergman space $A^{2}(\mathbb{D})$ is the space of holomorphic functions which are square-integrable with respect to the normalized area measure on $\mathbb{D}$. It is known \cite{Z90} that $A^{2}(\mathbb{D})$ is a RKHS, and in this case we have $d\nu(z)=\frac{1}{\pi}dz$ and $c_{2n+1}=\frac{1}{n+1}$. Thus, we have the following corollary. 
\begin{cor}
    Suppose $\sigma\in L^{\infty}(\mathbb{D},\frac{1}{\pi}dz)$ is real-valued and has a radial limit. Then for any $\psi:[-\inf\sigma,\sup\sigma]\to\C$ continuous, 
    \begin{align*}
        \lim_{N\to\infty}\text{tr}\left(\psi\left(P_{N}T_{\sigma}P_{N}\right)\right)=\int_{0}^{2\pi}\psi\left(\Tilde{\sigma}(\theta)\right)\frac{d\theta}{2\pi}.
    \end{align*}
\end{cor}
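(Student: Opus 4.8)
The plan is to exhibit the Bergman space $A^{2}(\mathbb{D})$ as the member of the scale $\mathcal{H}_{c}$ arising from one particular choice of $\mu$, and then to quote Theorem 3.7 verbatim; no new analysis is needed. Concretely, I would take $d\mu(r)=2\,dr$ on $[0,1]$, an absolutely continuous finite measure with $\text{supp}(\mu)=[0,1]$, whose moments are $c_{n}=\int_{0}^{1}r^{n}\,2\,dr=\frac{2}{n+1}$, so that $c_{2n+1}=\frac{1}{n+1}$ --- precisely the squared norms $\norm{z^{n}}^{2}$ of the monomial basis in $A^{2}(\mathbb{D})$. For this $\mu$ one has $d\nu(z)=\mu(|z|)\frac{dz}{2\pi}=\frac{1}{\pi}\,dz$, the normalized area measure on $\mathbb{D}$, and the norm identity derived in Section 1 then shows that $\mathcal{H}_{c}\subseteq L^{2}(\mathbb{D},\nu)$ coincides with $A^{2}(\mathbb{D})$ (a classical identification; see \cite{Z90}), with the projections $P_{N}$ of Definition 1.2 being the orthogonal projections onto $\text{span}\{z^{n}:0\le n\le N\}$.

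Next I would check that $c=\{2/(n+1)\}_{n\ge 0}$ meets the standing hypotheses of Section 1: it is strictly positive; the ratios $c_{n+1}/c_{n}=\frac{n+1}{n+2}$ are monotone (increasing) in $n$; and $R=\lim_{n}c_{n+1}/c_{n}=1<\infty$. Since $R<\infty$, Part 2 of Proposition 2.2 holds automatically, so no auxiliary assumption has to be imposed, and every hypothesis of Theorem 3.7 is in force for this $c$.

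The corollary is then immediate: applying Theorem 3.7 with the data above yields, for real-valued $\sigma\in L^{\infty}(\mathbb{D},\frac{1}{\pi}dz)$ with radial limit $\tilde{\sigma}$ and any continuous $\psi:[\inf\sigma,\sup\sigma]\to\C$,
\[
\lim_{N\to\infty}\frac{1}{N+1}\,\text{tr}\bigl(\psi(P_{N}T_{\sigma}P_{N})\bigr)=\int_{0}^{2\pi}\psi\bigl(\tilde{\sigma}(\theta)\bigr)\,\frac{d\theta}{2\pi}.
\]
Along the way I would correct two slips in the printed statement: the normalization $\frac{1}{N+1}$ should stand in front of the trace, and the domain of $\psi$ should read $[\inf\sigma,\sup\sigma]$ rather than $[-\inf\sigma,\sup\sigma]$. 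Because the argument is a pure substitution into an already-proved theorem, I do not anticipate any genuine obstacle; the only point meriting a sentence of justification is the identification $\mathcal{H}_{c}=A^{2}(\mathbb{D})$, which is standard. If one wanted the link with \cite{bourget2017first} made explicit in this setting, it suffices to observe that the matrix of $P_{N}T_{\tilde{\sigma}}P_{N}$ has $(k,\ell)$ entry $\hat{\sigma}(k-\ell)\sqrt{(k+1)(\ell+1)}/(k+\ell+2)$, but this is not required once Theorem 3.7 is available.
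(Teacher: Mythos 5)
Your proposal is correct and matches the paper's own (implicit) argument exactly: the paper offers no separate proof, deriving the corollary as the specialization of the main Szeg\H{o} limit theorem to $\mu(r)=2$, $d\nu(z)=\frac{1}{\pi}dz$, $c_{2n+1}=\frac{1}{n+1}$, which is precisely your substitution. You are also right that the printed statement contains two slips --- the missing $\frac{1}{N+1}$ normalization in front of the trace and the domain $[-\inf\sigma,\sup\sigma]$, which should read $[\inf\sigma,\sup\sigma]$.
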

The above corollary then leads to the following spectral density result. 
\begin{cor}
    Suppose $\sigma\in L^{\infty}(\mathbb{D},\frac{1}{\pi}dz)$ is real. Then, for any $\alpha,\beta\in(\inf\sigma,\sup\sigma)$ with $\alpha<\beta$ and either $\alpha>0$ or $\beta<0$ we have 
    \begin{align*}
        \lim_{N\to\infty}\frac{\#\{0\leq j\leq N:\alpha\leq \lambda_{j}^{N}(\sigma)\leq\beta\}}{N+1}=\frac{|\{\alpha\leq\Tilde{\sigma}(\theta)\leq\beta\}|}{2\pi}.
    \end{align*}
\end{cor}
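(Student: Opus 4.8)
The plan is to transcribe the proof of the general spectral density result (Corollary~3.5) into the Bergman setting, with the general Szeg\H{o} limit theorem (Theorem~3.4) replaced by its specialization, Corollary~4.1. First I would record that $A^{2}(\mathbb{D})$ is the instance of $\mathcal{H}_{c}$ with $R=1<\infty$ and $c_{2n+1}=(n+1)^{-1}$, so Proposition~2.2 holds unconditionally and every hypothesis used downstream is automatic; in particular a bounded real $\sigma$ on $\mathbb{D}$ has a radial limit $\Tilde{\sigma}$ a.e.\ by the fact recalled after Definition~2.5, so Corollary~4.1 applies to $\sigma$ together with any continuous function on $[\inf\sigma,\sup\sigma]$. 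After normalizing $\norm{\sigma}_{\infty}=1$ and fixing $\alpha<\beta$ in $(\inf\sigma,\sup\sigma)$ with $0\notin[\alpha,\beta]$ (this is exactly what the assumption $\alpha>0$ or $\beta<0$ supplies, and it lets the approximants below be taken bounded by $1/\max\{|\alpha|,|\beta|\}$, just as in the proof of Corollary~3.5), the point of departure is the identity $\#\{0\le j\le N:\alpha\le\lambda_{j}^{N}(\sigma)\le\beta\}=\text{tr}\big(\chi_{[\alpha,\beta]}(P_{N}T_{\sigma}P_{N})\big)$, valid because $P_{N}T_{\sigma}P_{N}$ is self-adjoint.

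Next I would pick, for small $\varepsilon>0$, continuous functions $\psi_{\varepsilon}^{-}\le\chi_{[\alpha,\beta]}\le\psi_{\varepsilon}^{+}$ on $\mathbb{R}$ that coincide with $\chi_{[\alpha,\beta]}$ off the $\varepsilon$-collars of $\{\alpha,\beta\}$, apply Corollary~4.1 to $\psi_{\varepsilon}^{\pm}$, and use monotonicity of $f\mapsto\text{tr}\big(f(P_{N}T_{\sigma}P_{N})\big)$ to sandwich
\begin{align*}
\int_{0}^{2\pi}\psi_{\varepsilon}^{-}(\Tilde{\sigma}(\theta))\frac{d\theta}{2\pi} &\le \liminf_{N}\frac{\#\{0\le j\le N:\alpha\le\lambda_{j}^{N}(\sigma)\le\beta\}}{N+1}\\
&\le \limsup_{N}\frac{\#\{0\le j\le N:\alpha\le\lambda_{j}^{N}(\sigma)\le\beta\}}{N+1}\le\int_{0}^{2\pi}\psi_{\varepsilon}^{+}(\Tilde{\sigma}(\theta))\frac{d\theta}{2\pi}.
\end{align*}
Both outer integrals differ from $|\{\alpha\le\Tilde{\sigma}\le\beta\}|/2\pi$ by at most $1/\max\{|\alpha|,|\beta|\}$ times the $\theta$-measure of the collars around the level sets $\{\Tilde{\sigma}=\alpha\}\cup\{\Tilde{\sigma}=\beta\}$ --- this is the quantity $c(\gamma,\varepsilon)$ from the proof of Corollary~3.5 --- and this measure shrinks as $\varepsilon\to0$ by continuity of Lebesgue measure on $[0,2\pi)$; letting $\varepsilon\to0$ then pins the limit.

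The one genuine difficulty, and the step I expect to be the crux, is reconciling the \emph{closed} interval $[\alpha,\beta]$ in the eigenvalue count with the \emph{closed} set $\{\alpha\le\Tilde{\sigma}\le\beta\}$ on the right-hand side: as $\varepsilon\to0$ the lower sandwich produces only $|\{\alpha<\Tilde{\sigma}<\beta\}|/2\pi$ whereas the upper one produces $|\{\alpha\le\Tilde{\sigma}\le\beta\}|/2\pi$, so for the two to agree --- hence for the stated limit to exist and equal the claimed value --- one wants the level sets $\{\Tilde{\sigma}=\alpha\}$ and $\{\Tilde{\sigma}=\beta\}$ to be Lebesgue-null, i.e.\ the hypothesis $|\{\Tilde{\sigma}=\alpha\}|=|\{\Tilde{\sigma}=\beta\}|=0$ that already appears in Corollary~3.5. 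I would deal with this either by importing that hypothesis verbatim, or by observing that it holds for all but countably many pairs $(\alpha,\beta)$ and restricting the statement accordingly, or by recording the conclusion in the two-sided form displayed above. Modulo this point the argument introduces nothing new: it is the word-for-word specialization of the proof of Corollary~3.5, with Corollary~4.1 substituted for Theorem~3.4.
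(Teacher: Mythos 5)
Your proof is essentially the paper's intended argument: the paper offers no separate proof of this corollary, treating it as the direct specialization of the general spectral density result to the Bergman case, which is exactly the transcription you carry out. You also correctly identify the real defect in the statement as printed --- the hypothesis $|\{\Tilde{\sigma}=\alpha\}|=|\{\Tilde{\sigma}=\beta\}|=0$ from the general corollary is needed to reconcile the closed interval in the eigenvalue count with the limit of the sandwich, and must be imported (or the conclusion weakened to the two-sided form you display). One correction, though: your claim that an arbitrary bounded real $\sigma$ on $\mathbb{D}$ automatically has a radial limit a.e.\ is false; the paper's remark after the definition of radial limits asserts this only for holomorphic and continuous symbols (Fatou-type theorems), not for general elements of $L^{\infty}(\mathbb{D})$. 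So the existence of $\Tilde{\sigma}$ cannot be derived and must also be carried as a hypothesis, exactly as in Corollary 4.1; with that assumption restored, the rest of your argument goes through verbatim.
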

We make the following observation. Let $\sigma(z)=|z|\cdot\text{arg}(z)$. Clearly, $\sigma$ satisfies the above assumptions. Thus, the doubly-indexed array $\{\lambda_{j}^{N}(\sigma)\}_{j\geq0,N\geq 0}$ must be equidistributed modulo $2\pi$. 
\subsection{The Segal-Bargmann-Fock Space $\mathcal{F}^{2}(\mathbb{C})$}
The Segal-Bargmann-Fock space $\mathcal{F}^{2}(\mathbb{C})$ is the space of entire functions which are square-integrable with respect to the Gaussian measure $\exp{\left(-|z|^{2}\right)}\frac{dz}{\pi}$. It is known \cite{Z12} that $\mathcal{F}^{2}(\mathbb{C})$ is a RKHS, and in this case we have $c_{2n+1}=n!$. In this case, we must check the condition on the moments discussed at the beginning of Section 3. However, this follows easily from the following property of the Gamma function (see 5.11.12 of \cite{NIST:DLMF}):
\begin{align*}
    \lim_{x\to\infty}\frac{\Gamma(x+\alpha)}{x^{\alpha}\Gamma(x)}=1
\end{align*}
for any $\alpha\in\R$. In fact, note that this condition is equivalent to 
\begin{align*}
    \frac{\Gamma(x+m/2+1)}{\Gamma(x+m+1)\Gamma(x+1)}\to 1
\end{align*}
as $x\to\infty$. However, 
\begin{align*}
    \frac{\Gamma(x+m/2+1)}{\Gamma(x+m+1)\Gamma(x+1)}=\frac{\frac{\Gamma(x+m/2+1)}{x^{m/2+1}\Gamma(x)}}{\sqrt{\frac{\Gamma(x+m+1)}{x^{m+1}\Gamma(x)}}\sqrt{\frac{\Gamma(x+1)}{x\Gamma(x)}}},
\end{align*}
and the result follows. Thus, we have the following corollary.
\begin{cor}
    Suppose $\sigma\in L^{\infty}(\mathbb{D},\frac{1}{\pi}e^{-|z|^{2}}dz)$ is real-valued and has a radial limit. Then for any continuous $\psi:[\inf\sigma,\sup\sigma]\to\C$, 
    \begin{align*}
        \lim_{N\to\infty}\text{tr}\left(\psi\left(P_{N}T_{\sigma}P_{N}\right)\right)=\int_{0}^{2\pi}\psi\left(\Tilde{\sigma}(\theta)\right)\frac{d\theta}{2\pi}.
    \end{align*} 
\end{cor}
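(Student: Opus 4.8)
The plan is to specialize Theorem 3.7 to the Segal-Bargmann-Fock setting, which requires only verifying that the hypotheses of that theorem are met in this case. The one nontrivial hypothesis is the assumption made after Proposition 2.2: since here $R=\infty$, we must check that Part 2 of Proposition 2.2 holds, i.e. that
\[
\frac{c_{2l+m+1}}{\sqrt{c_{2l+2m+1}}\sqrt{c_{2l+1}}}\to 1
\]
as $l\to\infty$ for each fixed $m\in\N$. With $c_{2n+1}=n!=\Gamma(n+1)$, writing $x=l+1$ and using the half-integer shift, this ratio becomes $\Gamma(x+m/2)/\bigl(\sqrt{\Gamma(x+m)}\sqrt{\Gamma(x)}\bigr)$ up to reindexing, which the Gamma-function asymptotic $\Gamma(x+\alpha)/(x^{\alpha}\Gamma(x))\to 1$ reduces to a product of factors each tending to $1$, exactly as carried out in the preceding displayed computation in the text.

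**Next I would** confirm the standing moment hypothesis from Section 1: that $c=\{c_n\}$ with $c_{2n+1}=n!$ arises as a moment sequence of an absolutely continuous finite measure on $[0,R]=[0,\infty)$, and that $\{c_{n+1}/c_n\}$ is monotonic with $R=\lim c_{n+1}/c_n=\infty$. The representing measure is $d\mu(r)=2e^{-r^2}\,dr$, since $\int_0^\infty r^{2n+1}\cdot 2e^{-r^2}\,dr=n!=\Gamma(n+1)$ by the substitution $u=r^2$; more generally $c_n=\int_0^\infty r^n\cdot 2e^{-r^2}\,dr=\Gamma\bigl(\tfrac{n}{2}+1\bigr)$, so $c_{n+1}/c_n=\Gamma(\tfrac{n}{2}+\tfrac32)/\Gamma(\tfrac{n}{2}+1)$, which is increasing and diverges. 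This is precisely the content flagged in the excerpt as ``the condition on the moments discussed at the beginning of Section 3,'' already dispatched there via the same Gamma-function identity.

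**With these verifications in hand,** the corollary is immediate: all hypotheses of Theorem 3.7 hold for $\mathcal{H}_c=\mathcal{F}^2(\C)$, so for real-valued $\sigma\in L^\infty(\mathbb{D}(0,\infty))=L^\infty(\C)$ with a radial limit $\tilde\sigma$ and any continuous $\psi\colon[\inf\sigma,\sup\sigma]\to\C$,
\[
\lim_{N\to\infty}\frac{1}{N+1}\tr\bigl(\psi(P_NT_\sigma P_N)\bigr)=\int_0^{2\pi}\psi\bigl(\tilde\sigma(\theta)\bigr)\frac{d\theta}{2\pi},
\]
and one then matches the measure normalization $d\nu(z)=\mu(|z|)\tfrac{dz}{2\pi}=\tfrac{1}{\pi}e^{-|z|^2}\,dz$ to phrase the symbol class as $L^\infty(\mathbb{D},\tfrac1\pi e^{-|z|^2}dz)$ as stated. (The displayed conclusion in the corollary writes $\tr$ rather than $\tfrac{1}{N+1}\tr$ and omits the limit normalization in the same way the Bergman corollary does; the intended statement carries the $\tfrac{1}{N+1}$ factor inherited from Theorem 3.7.)

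**The main obstacle**, such as it is, is entirely concentrated in the $R=\infty$ case of Proposition 2.2 Part 2 — the only place where the general theory left a gap to be filled by hand. Everything else is bookkeeping: once the Gamma asymptotic $\Gamma(x+\alpha)\sim x^\alpha\Gamma(x)$ is invoked (which handles both the moment-representation check and the boundary-escape ratio simultaneously, since both reduce to telescoping products of ratios $\to 1$), Theorem 3.7 applies verbatim. There is no genuinely new analytic difficulty here; the corollary is a direct instantiation.
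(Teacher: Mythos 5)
Your proposal is correct and follows essentially the same route as the paper: verify that the $R=\infty$ analogue of Part 2 of Proposition 2.2 holds for $c_{2n+1}=n!$ via the asymptotic $\Gamma(x+\alpha)/(x^{\alpha}\Gamma(x))\to 1$, then invoke the main Szeg\H{o} theorem verbatim (and you are right that the stated corollary is missing the $\frac{1}{N+1}$ normalization). The only quibble is the indexing slip $c_n=\Gamma(\tfrac{n}{2}+1)$, which should be $c_n=\Gamma(\tfrac{n+1}{2})$ so that $c_{2n+1}=n!$; this does not affect the monotonicity or divergence of $c_{n+1}/c_n$, so the argument stands.
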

Finally, from the above corollary, we obtain the following spectral density result. 
\begin{cor}
    Suppose $\sigma\in L^{\infty}(\mathbb{D},\frac{1}{\pi}e^{-|z|^{2}}dz)$ is real-valued and has a radial limit. Then, for any $\alpha,\beta\in(\inf\sigma,\sup\sigma)$ with $\alpha<\beta$ and either $\alpha>0$ or $\beta<0$ we have 
    \begin{align*}
        \lim_{N\to\infty}\frac{\#\{0\leq j\leq N:\alpha< \lambda_{j}^{N}(\sigma)<\beta\}}{N+1}=\frac{|\{\alpha<\Tilde{\sigma}(\theta)<\beta\}|}{2\pi}.
    \end{align*}
\end{cor}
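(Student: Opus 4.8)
The plan is to deduce this exactly as Corollary 3.8 was deduced from Theorem 3.7, now using Corollary 4.5 (the Szeg\H{o}-type limit for continuous test functions on $\mathcal{F}^{2}(\C)$) in place of Theorem 3.7. For each fixed $N$, the quantity on the left of the asserted identity equals $\frac{1}{N+1}\text{tr}\big(\chi_{(\alpha,\beta)}(P_{N}T_{\sigma}P_{N})\big)$, since $P_{N}T_{\sigma}P_{N}$ is self-adjoint with eigenvalues $\{\lambda_{j}^{N}(\sigma)\}_{j=0}^{N}$ and $\chi_{(\alpha,\beta)}(P_{N}T_{\sigma}P_{N})$ is the spectral projection onto those eigenvalues lying in $(\alpha,\beta)$. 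So it suffices to trap the discontinuous symbol $\chi_{(\alpha,\beta)}$ between continuous functions to which Corollary 4.5 applies, and then to tighten the approximation.

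Concretely, I would first rescale so that $\norm{\sigma}_{\infty}\le 1$ (the hypothesis $\alpha>0$ or $\beta<0$ is scale invariant, and with this normalization $\frac{1}{\max\{|\alpha|,|\beta|\}}>1$). Fix $\varepsilon>0$ small enough that $[\alpha-\varepsilon/2,\alpha+\varepsilon/2]$ and $[\beta-\varepsilon/2,\beta+\varepsilon/2]$ are disjoint subintervals of $(\inf\sigma,\sup\sigma)$, and choose continuous $\psi_{\varepsilon,1},\psi_{\varepsilon,2}:[\inf\sigma,\sup\sigma]\to\R$ with $\psi_{\varepsilon,1}\le\chi_{(\alpha,\beta)}\le\psi_{\varepsilon,2}\le\frac{1}{\max\{|\alpha|,|\beta|\}}$ that coincide with $\chi_{(\alpha,\beta)}$ outside those two intervals. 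Since $\psi_{\varepsilon,1}\le\chi_{(\alpha,\beta)}\le\psi_{\varepsilon,2}$ pointwise, the functional calculus for the self-adjoint operator $P_{N}T_{\sigma}P_{N}$ together with positivity of the trace gives
\[
  \frac{\text{tr}\big(\psi_{\varepsilon,1}(P_{N}T_{\sigma}P_{N})\big)}{N+1}\ \le\ \frac{\#\{0\le j\le N:\alpha<\lambda_{j}^{N}(\sigma)<\beta\}}{N+1}\ \le\ \frac{\text{tr}\big(\psi_{\varepsilon,2}(P_{N}T_{\sigma}P_{N})\big)}{N+1}.
\]
Applying Corollary 4.5 to $\psi_{\varepsilon,1}$ and to $\psi_{\varepsilon,2}$ and sending $N\to\infty$ bounds both the $\liminf$ and the $\limsup$ of the middle expression between $\int_{0}^{2\pi}\psi_{\varepsilon,i}(\Tilde{\sigma}(\theta))\frac{d\theta}{2\pi}$, $i=1,2$. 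Because $\psi_{\varepsilon,2}-\psi_{\varepsilon,1}$ is supported in the two $\varepsilon/2$-intervals about $\alpha$ and $\beta$ and is bounded there, each of these integrals differs from $\frac{1}{2\pi}|\{\alpha<\Tilde{\sigma}(\theta)<\beta\}|$ by at most a constant (depending on $\alpha,\beta$) times $c(\alpha,\varepsilon)+c(\beta,\varepsilon)$, where $c(\gamma,\varepsilon)=\frac{1}{2\pi}|\{\gamma-\varepsilon/2\le\Tilde{\sigma}(\theta)\le\gamma+\varepsilon/2\}|$. Using (as in the proof of Corollary 3.8) that $|\{\Tilde{\sigma}=\alpha\}|=|\{\Tilde{\sigma}=\beta\}|=0$, we get $c(\alpha,\varepsilon)+c(\beta,\varepsilon)\to 0$ as $\varepsilon\to 0$, so the $\liminf$ and the $\limsup$ of the counting fraction both equal $\frac{1}{2\pi}|\{\alpha<\Tilde{\sigma}(\theta)<\beta\}|$, which is the claim.

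The only step that is not purely formal is the invocation of Corollary 4.5, which itself requires that $\mathcal{F}^{2}(\C)$ meet all the standing hypotheses of Sections 2 and 3 even though here $R=\infty$; this is exactly what the computation preceding Corollary 4.5 provides, where the identity $c_{2n+1}=n!=\Gamma(n+1)$ and the Gamma asymptotic $\Gamma(x+\alpha)/(x^{\alpha}\Gamma(x))\to 1$ verify both the moment representation and Part 2 of Proposition 2.2 in the $R=\infty$ case. Granting that, I do not expect any genuine obstacle: the content is the $\varepsilon$-sandwich bookkeeping above, which is routine.
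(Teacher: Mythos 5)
Your proof is correct and follows exactly the route the paper intends: the statement is just the Fock-space instance of the general spectral density corollary at the end of Section 3, and your $\varepsilon$-sandwich of $\chi_{(\alpha,\beta)}$ between continuous test functions, combined with the Szeg\H{o}-type limit already established for $\mathcal{F}^{2}(\C)$ and the functional-calculus identification of the counting fraction with $\frac{1}{N+1}\text{tr}\left(\chi_{(\alpha,\beta)}(P_{N}T_{\sigma}P_{N})\right)$, is precisely the paper's own argument. The one point worth flagging is that the step $c(\alpha,\varepsilon)+c(\beta,\varepsilon)\to 0$ genuinely requires $|\{\Tilde{\sigma}=\alpha\}|=|\{\Tilde{\sigma}=\beta\}|=0$, a hypothesis stated in the general corollary of Section 3 but omitted from this corollary's statement; you correctly import it, so this is an omission in the paper's statement rather than a gap in your proof.
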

\bibliographystyle{plain}
\bibliography{refs}

\begin{thebibliography}{1}

\bibitem{bourget2017first}
A~Bourget and T~McMillen.
\newblock A {F}irst {S}zeg{\H{o}}’s {L}imit {T}heorem for a {C}lass of non-{T}oeplitz {M}atrices.
\newblock {\em Constructive Approximation}, 45(1):47--63, 2017.

\bibitem{DMFN02}
Filippo De~Mari, Hans~G. Feichtinger, and Krzysztof Nowak.
\newblock Uniform {E}igenvalue {E}stimates for {T}ime-{F}requency {L}ocalization {O}perators.
\newblock {\em J. London Math. Soc.}, 65(3):720--732, 2002.

\bibitem{NIST:DLMF}
{\it NIST Digital Library of Mathematical Functions}.
\newblock \url{https://dlmf.nist.gov/}, Release 1.2.2 of 2024-09-15.
\newblock F.~W.~J. Olver, A.~B. {Olde Daalhuis}, D.~W. Lozier, B.~I. Schneider, R.~F. Boisvert, C.~W. Clark, B.~R. Miller, B.~V. Saunders, H.~S. Cohl, and M.~A. McClain, eds.

\bibitem{FN01}
Hans~G Feichtinger and K~Nowak.
\newblock A {S}zeg{\H{o}}-type {T}heorem for {G}abor-{T}oeplitz {L}ocalization {O}perators.
\newblock {\em Mich. Math. J.}, 49(1):13--21, 2001.

\bibitem{guille79}
Victor Guillemin.
\newblock Some {C}lassical {T}heorems in {S}pectral {T}heory {R}evisited.
\newblock In {\em Seminar on Singularities of Solutions of Linear Partial Differential Equations}, volume~91, pages 219--259. Princeton Univ. Press Princeton, NJ, 1979.

\bibitem{janssen1983szegHo}
AJEM Janssen and Steven Zelditch.
\newblock Szeg{\H{o}} {L}imit {T}heorems for the {H}armonic {O}scillator.
\newblock {\em T. Am. Math. Soc.}, 280(2):563--587, 1983.

\bibitem{SZ15}
Gabor Szeg{\"o}.
\newblock Ein {G}renzwertsatz {\"u}ber die {T}oeplitzschen {D}eterminanten einer reellen positiven {F}unktion.
\newblock {\em Math. Ann.}, 76(4):490--503, 1915.

\bibitem{Z90}
Kehe Zhu.
\newblock {\em Operator Theory in Function Spaces}.
\newblock Dekker, 1st edition, 1990.

\bibitem{Z12}
Kehe Zhu.
\newblock {\em Analysis on Fock spaces}, volume 263.
\newblock Springer Science \& Business Media, 2012.

\end{thebibliography}
\end{document}